\newtheorem{theorem}{\sc Theorem}[section]
\newtheorem{lemma}[theorem]{\sc Lemma}
\title[Chernikov-by-nilpotent groups]{On Chernikov-by-nilpotent groups}
\author[M. Capasso]{Martina Capasso } 
\address{Martina Capasso: Dipartimento di Matematica e Applicazioni, Università di Napoli Federico II, Complesso Universitario Monte S. Angelo, Naples (Italy)}
\email{martina.capasso2@unina.it}
\author[L. Lancellotti]{Liliana Lancellotti} 
\address{Liliana Lancellotti: Dipartimento di Matematica e Applicazioni, Università di Napoli Federico II, Complesso Universitario Monte S. Angelo, Naples (Italy)}
\email{liliana.lancellotti@unina.it}
\author[P. Shumyatsky]{Pavel Shumyatsky} 
\address{Pavel Shumyatsky: Department of Mathematics, University of Brasilia, Brasilia, DF, Brazil}
\email{pavel@unb.br}
\thanks{The first and the second authors are members of GNSAGA (INdAM) and  of AGTA – Advances in Group Theory and Applications (\href{www.advgrouptheory.com}{www.advgrouptheory.com}). The work of the third author was supported by FAPDF and CNPq. 
The first and second authors wish to thank the University of Brasília for its kind hospitality during the visit in which this work was prepared.}
\keywords{Chernikov group, conjugacy classes, verbal subgroups}
\subjclass[2020]{ 20F24, 20E45}
\begin{document}

\maketitle
\begin{center}
    \emph{In memory of Francesco de Giovanni}
\end{center}

\medskip \medskip

\begin{abstract} Let $\gamma_k=[x_1,\dots,x_k]$ be the $k$-th lower central group-word. Given a group $G$, we write $X_k(G)$ for the set of $\gamma_k$-values and $\gamma_k(G)$ for the $k$-th term of the lower central of $G$. This paper deals with groups in which $\langle g^{X_k(G)} \rangle$ is a Chernikov group of size at most $(m,n)$ for all $g\in G$. The main result is that $\gamma_{k+1}(G)$ is a Chernikov group and its size is $(k,m,n)$-bounded. 
\end{abstract}

\section{Introduction}
Let $G$ be a group. If $X$ and $Y$ are non-empty subsets of $G$, we write $X^Y$ to denote the set $\{y^{-1}xy : x \in X , y \in Y \}$. Given $ k \geq 1$ and elements $x_1, \dots , x_k \in G$, we write $[x_1, \dots, x_k]$ for the left-normed commutator $[ \dots [[x_1, x_2], x_3] \dots , x_k]$. Elements that can be written as $[x_1, \dots, x_k]$ for suitable $x_1, \dots, x_k \in G$ will be called \emph{$\gamma_k$-values}, and the set of all $\gamma_k$-values in $G$ will be denoted by $X_k(G)$. Of course, the subgroup generated by the $\gamma_k$-values is the $k$-th term of the lower central series of $G$, usually denoted by $\gamma_k(G)$.

Of course, if the conjugacy class  $x^G$ of an element $x \in G$ is finite, we have $|x^G|= [G : C_G (x)]$. A group is said to be an \emph{$FC$-group} if $x^G$ is finite for every $x\in G$ and a \emph{$BFC$-group} if there is an integer n such that $|x^G|\leq n$ for every $x\in G$.
One of the most famous of B. H. Neumann’s theorems says that in a $BFC$-group the commutator subgroup is finite \cite{neumann}. Later J. Wiegold showed that if $|x^G| \leq n$ for every $x \in G$, then the order of $G'$ is bounded by a number depending only on $n$. Moreover, Wiegold found a first explicit bound for the order of $G'$ \cite{wie}, and the best known was obtained in \cite{gura} (see also \cite{neuvl,sesha}). \par
 
In recent years, Neumann's theorem has been extended in several directions. In particular, groups in which conjugacy classes containing commutators are bounded were treated in \cite{dms, dieshu}. Several authors have considered so-called \emph{$BCC$-groups}, that are groups in which $G/C_G(\langle x^G\rangle)$ is a Chernikov group of uniformly bounded size for every $x \in G$ (see \cite{DFAdGM2021}). 

Recall that a group $G$ is \emph{Chernikov} if it contains a radicable abelian normal subgroup $R$ (the finite residual) which is the direct product of a finite number $m(G)$ of groups of Prüfer type, such that the factor group $G/R$ is finite, of order $n(G)$ say. In general a group $G$ is called \emph{radicable} if the equation $x^n=a$ has a solution in $G$ for every positive integer $n$ and every $a \in G$.
By a deep result obtained independently by Shunkov \cite{Shunkov} and Kegel and Wehrfritz \cite{Kegel}, Chernikov groups are precisely the locally finite groups satisfying the minimal condition on subgroups, that is, any non-empty set of subgroups possesses a minimal subgroup. Furthermore, if $G$ is a Chernikov group with $m=m(G)$ and $n=n(G)$, the pair $(m,n)$ is usually called the \emph{size} of $G$. Of course, a Chernikov group $G$ is finite if and only if $m(G)=0$, and it is a radicable abelian group if and only if $n(G)=1$. 

It is well-known that the class of Chernikov groups is closed with respect to subgroups, homomorphic images and extensions. In particular, if $G$ is a group and $N$ is a normal subgroup of $G$ such that $N$ is a Chernikov group of size $(m,n)$ and $G/N$ is a Chernikov group of size $(m',n')$, then $G$ is a Chernikov group of size at most $((m+m'+n+n')^2, nn')$ and so bounded in terms of $m,n,m'$ and $n'.$ 

 Using the concept of verbal conjugacy classes introduced in \cite{FdGS2002}, an extension of Neumann’s theorem was obtained in \cite{DDMP2021}. It was shown that if $k,n$ are positive integers and $G$ is a group in which $|g^{X_k(G)}| \leq n$ for any $g\in G$, then $\gamma_{k+1}(G)$ has finite $(k, n)$-bounded order.

Throughout the article, we use the expression “$(a, b, \dots )$-bounded” to mean that a quantity is finite and bounded by a certain number depending only on the parameters $a, b, \dots$. 

In this paper we consider groups in which the subgroup $\langle g^{X_k(G)} \rangle$ is a Chernikov group of bounded size for every $g\in G$. We establish the following theorem. 

\begin{theorem}
\label{main}
Let $k,m,n$ be non-negative integers (with $k,n \neq0$), and let $G$ be a group such that $\langle g^{X_k{(G)}} \rangle$ is a Chernikov group of size at most $(m,n)$ for every $g\in G$. Then $\gamma_{k+1}(G)$ is a Chernikov group and its size is $(k,m,n)$-bounded. 
\end{theorem}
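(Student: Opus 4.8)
The plan is to combine three ingredients: an elementary observation forcing $G$ to be periodic, the finite case already available from \cite{DDMP2021}, and a structural analysis of the radicable parts of the subgroups $\langle g^{X_k(G)}\rangle$. First I would record the basic reductions. Since $[1,\dots,1]=1$, the identity is a $\gamma_k$-value, so $g=g^1\in g^{X_k(G)}\subseteq \langle g^{X_k(G)}\rangle$ for every $g$; as the latter is Chernikov, hence periodic, every element of $G$ has finite order and $G$ is periodic. Writing $H_g=\langle g^{X_k(G)}\rangle$, the set $X_k(G)$ is closed under conjugation, so $H_g^{\,t}=H_{g^t}$, and the finite residual $D_g$ of $H_g$ (a divisible abelian group of rank at most $m$) satisfies $D_g^{\,t}=D_{g^t}$. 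Moreover $[g,w]=g^{-1}g^w\in H_g$ for every $w\in X_k(G)$, and since $\gamma_{k+1}(G)$ is generated by the elements $[g,w]$ with $g\in G$ and $w\in X_k(G)$, it is generated by a conjugation-invariant family of elements, each lying in one of the Chernikov groups $H_g$.

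The key reduction is to pass to the quotient by $R=\langle D_g : g\in G\rangle$, which is normal in $G$ because the family $\{D_g\}$ is permuted by conjugation. For $\bar g=gR$ one has $\langle \bar g^{\,X_k(G/R)}\rangle = H_gR/R\cong H_g/(H_g\cap R)$, and since $D_g\le H_g\cap R$ this is a quotient of the finite group $H_g/D_g$, hence of order at most $n$. Thus $G/R$ satisfies the hypothesis of \cite{DDMP2021} with $m=0$, and that theorem yields that $\gamma_{k+1}(G/R)=\gamma_{k+1}(G)R/R$ is finite of $(k,n)$-bounded order; in particular $[\gamma_{k+1}(G):\gamma_{k+1}(G)\cap R]$ is $(k,n)$-bounded. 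One also sees that $G/R$ is locally finite: its $(k+1)$-st lower central term is finite, while the corresponding quotient is periodic and nilpotent, hence locally finite. Granting that $R$ is a Chernikov group of $(k,m,n)$-bounded size, its subgroup $\gamma_{k+1}(G)\cap R$ is one as well, and then the extension-closure of the class of Chernikov groups (with the explicit size bound recalled in the introduction) shows that $\gamma_{k+1}(G)$ is Chernikov of $(k,m,n)$-bounded size, as required.

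It therefore remains to prove that $R$ is Chernikov of $(k,m,n)$-bounded size, and this is where I expect the main difficulty to lie. The subgroup $R$ is generated by the divisible abelian subgroups $D_g$, each of rank at most $m$, but a group generated by a conjugation-closed family of bounded-rank divisible subgroups need not be Chernikov, so the commutator structure must be exploited. My plan here is first to establish that $R$ is locally finite and then to produce a $(k,m,n)$-bound on the Prüfer rank of every abelian subgroup of $R$ (and on the number of primes occurring), after which Shunkov's refinement of the characterization recalled above (a locally finite group satisfying the minimal condition on abelian subgroups is Chernikov, \cite{Shunkov}) yields that $R$ is Chernikov, with size controlled by these rank and prime bounds. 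The heart of the matter is the uniform rank bound: the divisible parts $D_g$ for different $g$ could a priori be independent, and one must use that they all embed in the size-$(m,n)$ groups $H_g$ together with the relations coming from the commutators $[g,w]$. I would attack this by applying the finite theorem of \cite{DDMP2021} to finite $G$-sections of $R$ built from the bottom layers $\Omega_1(D_g)$ of the Prüfer subgroups, translating a large independent system of Prüfer factors into a violation of the $(k,n)$-bound in a suitable finite quotient, possibly in combination with a Neumann-type covering argument. This uniform control of the radicable part is the crux on which the whole theorem rests.
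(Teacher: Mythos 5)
Your opening reduction is sound, and it is even slightly more economical than the first stage of the paper's own proof. Indeed, $G$ is periodic since $g\in H_g=\langle g^{X_k(G)}\rangle$; the family $\{D_g\}$ is permuted by conjugation, so $R=\langle D_g : g\in G\rangle$ is normal; $\langle \bar g^{\,X_k(G/R)}\rangle=H_gR/R$ is a quotient of $H_g/D_g$, hence of order at most $n$; so \cite{DDMP2021} bounds $|\gamma_{k+1}(G)R/R|$ in terms of $k,n$, and the extension-closure of Chernikov groups reduces the theorem to showing that $R$ (or $\gamma_{k+1}(G)\cap R$) is Chernikov of $(k,m,n)$-bounded size. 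The paper reaches the same intermediate stage by a different route (Theorem \ref{13} from \cite{MES2013}, the radicable parts $R_g$ of $[\gamma_k(G),g]$, Lemma \ref{L14}, and a Dedekind-law computation), whereas your reduction avoids Theorem \ref{13} altogether.

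However, everything after that point is a plan, not a proof, and you acknowledge this yourself; the unproven claim that $R$ is Chernikov of $(k,m,n)$-bounded size \emph{is} the theorem, not a technical remainder. Moreover, the plan as sketched has concrete defects. First, Shunkov's minimal-condition criterion can at best certify that $R$ is Chernikov; it produces no bound depending only on $k,m,n$, and even granting a uniform rank bound on abelian subgroups you would control $m(R)$ but not $n(R)$. Second, the proposed contradiction --- ``a large independent system of Prüfer factors violates the $(k,n)$-bound in a suitable finite quotient'' --- cannot occur in any finite quotient of $G$: the image of each $D_g$ there is a finite radicable abelian group, hence trivial, so every finite quotient of $G$ automatically satisfies the hypothesis of \cite{DDMP2021} with bound $n$ and can never exhibit a violation. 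One is thus forced into finite \emph{sections}, where two new problems appear: the hypothesis no longer passes down with the bound $n$ (finite subgroups of Prüfer groups have unbounded order), and, more fundamentally, largeness of the abelian group $R$ says nothing about largeness of $\gamma_{k+1}$ of a finite section --- an enormous elementary abelian section built from the layers $\Omega_1(D_g)$ has trivial derived subgroup. Relating the rank of $R$ to the commutator structure is exactly the hard content, and the paper does it by a completely different mechanism: let $G^*=G/C_G(R)$ act faithfully on $R$; the bounded finiteness of $\gamma_{k+1}(G^*)$ makes $C^*=C_{G^*}(\gamma_{k+1}(G^*))$ nilpotent of $(k,n)$-bounded index $t$; each $[R,g]$ has rank at most $m$ because $[R,g]=[R,{}_{k-1}g]$ consists of $\gamma_k$-values, whence $[R,g]=[R,g,g]\le \langle g^{X_k(G)}\rangle$ (Lemmas \ref{C3} and \ref{L13}); the key Lemma \ref{P8} (an induction on rank for periodic nilpotent actions on radicable abelian groups) bounds the rank of $[R,C^*]$ by $h(m)$; a transversal argument then bounds $[R,G]$ by $h+tm$ Prüfer factors; and finally Lemma \ref{L12} shows that after factoring out $[R,G]$ the remaining (central) part of $R$ vanishes. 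Without this machinery, or a genuine substitute for it, your argument establishes only the easy half of the theorem.
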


Obviously, this is an extension of the aforementioned result from \cite{DDMP2021}, which can be recovered from Theorem \ref{main} as a particular case where $m=0$.

\section{Proofs}

We start the section with the following well-known lemmas. The interested reader can find their proofs for example in \cite{MES2013}.

\begin{lemma}
\label{C1}
In a periodic nilpotent group $G$ every radicable abelian subgroup $R$ is central.
\end{lemma}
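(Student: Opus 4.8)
The plan is to argue by induction on the nilpotency class $c$ of $G$. When $c\le 1$ the group is abelian and there is nothing to prove, so I would assume $c\ge 2$ and that the statement already holds for every periodic nilpotent group of strictly smaller class.

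First I would pass to the quotient $\bar G = G/Z(G)$, which is again periodic and nilpotent but of class $c-1$. The image $\bar R = RZ(G)/Z(G)$ is a homomorphic image of $R$, hence abelian and---this is the point that makes the induction engage---still radicable: a surjective image of a radicable group is radicable, since one can pull back an element, extract an $n$-th root upstairs in $R$, and push it down. By the inductive hypothesis $\bar R$ is central in $\bar G$, and this says precisely that $[R,G]\le Z(G)$.

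Granting $[R,G]\le Z(G)$, the commutator map becomes linear in each argument: for $r\in R$ and $g\in G$ the standard identities $[r,xy]=[r,y]\,[r,x]^{y}$ and $[xy,r]=[x,r]^{y}[y,r]$ collapse, because the commutators occurring lie in the centre and are therefore fixed under conjugation. Consequently $[s^{j},g]=[s,g]^{j}$ and $[s,g^{j}]=[s,g]^{j}$ for all $s\in R$, $g\in G$ and $j\ge 1$. The decisive step then exploits the two hypotheses simultaneously: since $G$ is periodic, a given $g$ has finite order $m$, and since $R$ is radicable I may choose $s\in R$ with $s^{m}=r$, whence
\[
[r,g]=[s^{m},g]=[s,g]^{m}=[s,g^{m}]=[s,1]=1 .
\]
As $r\in R$ and $g\in G$ were arbitrary, this forces $R\le Z(G)$ and closes the induction.

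I expect the difficulty here to be organisational rather than conceptual. The two facts that must be handled carefully are that $\bar R$ remains radicable (so that the inductive step is legitimate) and that the inclusion $[R,G]\le Z(G)$ genuinely licenses treating $[\,\cdot\,,\,\cdot\,]$ as additive in each variable. Once these are secured, the heart of the argument is the interplay of periodicity---which supplies the exponent $m$ with $g^{m}=1$---and radicability---which lets that same exponent be absorbed as an $m$-th root inside $R$; it is exactly this pairing that annihilates the commutator.
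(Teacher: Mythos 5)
Your proof is correct: the induction on the nilpotency class reduces to $[R,G]\le Z(G)$, after which bilinearity of the commutator modulo the centre lets periodicity (giving $g^m=1$) and radicability (giving $s\in R$ with $s^m=r$) combine in $[r,g]=[s^m,g]=[s,g]^m=[s,g^m]=1$, and the two delicate points you flag—radicability of $\bar R$ and the legitimacy of treating $[\,\cdot\,,\,\cdot\,]$ as additive in each variable—are both handled properly. The paper does not prove this lemma itself but defers to \cite{MES2013}, and your argument is precisely the standard one given in the literature, so there is nothing to add.
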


Let $A$ be a group acting on a group $G$. As usual, $[G,A]$ denotes the
subgroup generated by all elements of the form $x^{-1}x^a$, where $x \in G$, $a \in A$. It is well-known that $[G,A]$ is an $A$-invariant normal subgroup of $G$. \par
For elements $x,y$ in a group $G$, we write $[x, {}_{{l}}y]$ to denote the long commutator $[x,y, \dots, y]$, where $y$ is repeated $l$ times. If $N$ is a subgroup of $G$, the symbol $[N, y]$ stands for the subgroup generated by the commutators $[x, y]$, where $x$ ranges over $N$. For $l \geq 2$, define by induction $[N, {}_{{l}}y]= [[N, {}_{{l-1}}y],y]$. Note that if $N$ is an abelian normal subgroup, $[N, {}_{{l}}y]$ is precisely the set of commutators $[x, {}_{{l}}y]$, where $x \in N$. 

\begin{lemma}
\label{C3}
Let $A$ be a periodic group acting on a periodic radicable abelian group $G$. Then $[G, A , A] =[G,A]$.
\end{lemma}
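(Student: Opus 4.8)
The plan is to work in additive notation, treating each $a \in A$ as an automorphism of the divisible abelian group $G$, so that $[x,a] = (a-1)x$ and $[G,A] = \sum_{a \in A}(a-1)G$. The inclusion $[G,A,A] \subseteq [G,A]$ is automatic, so everything reduces to the reverse inclusion. Since $[G,A]$ is generated by the subgroups $(b-1)G$ as $b$ ranges over $A$, it is enough to show that $(b-1)G \subseteq [[G,A],A]$ for each fixed $b$. I would isolate the crux as the following single-automorphism statement: if $b \in A$ has finite order, then $(b-1)^2 G = (b-1)G$.

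To prove this core claim, set $H_b = (b-1)G$ and let $n$ be the order of $b$. First note that $H_b$ is divisible, being a homomorphic image of the divisible group $G$. Consider the norm operator $\sigma_b = 1 + b + \cdots + b^{n-1}$. Because $\sigma_b(b-1) = b^n - 1 = 0$, the operator $\sigma_b$ annihilates $H_b = (b-1)G$. On the other hand, $b$ acts trivially on the quotient $H_b/(b-1)H_b$, so on this quotient $\sigma_b$ coincides with multiplication by $n$. Combining the two observations, multiplication by $n$ is zero on $H_b/(b-1)H_b$; that is, the quotient has exponent dividing $n$. But $H_b/(b-1)H_b$ is divisible, being a quotient of the divisible group $H_b$, and a nonzero divisible abelian group can never have bounded exponent. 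Hence $H_b/(b-1)H_b = 0$, i.e. $(b-1)H_b = H_b$, which is exactly $(b-1)^2 G = (b-1)G$.

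Finally I would assemble the pieces. Writing $H = [G,A]$, each $(b-1)G$ lies in $H$, so the core claim gives $(b-1)G = (b-1)^2 G = (b-1)\bigl((b-1)G\bigr) \subseteq (b-1)H \subseteq [H,A] = [G,A,A]$. Taking the sum over all $b \in A$ yields $[G,A] \subseteq [G,A,A]$, and together with the trivial reverse inclusion this gives the desired equality. The one genuinely delicate point is the passage from the algebraic identity $\sigma_b = 0$ on $H_b$ to the surjectivity of $b-1$ on $H_b$: this is precisely where radicability is indispensable, via the elementary but essential fact that a nonzero divisible abelian group cannot be of bounded exponent. The periodicity of $A$ enters only to guarantee that each $b$ has a finite order $n$, which is what makes the norm operator available.
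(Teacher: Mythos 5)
Your proof is correct: the norm operator $\sigma_b = 1+b+\cdots+b^{n-1}$ kills $(b-1)G$, forcing the divisible quotient $(b-1)G/(b-1)^2G$ to have exponent dividing $n$ and hence to vanish, and the assembly over all $b\in A$ is sound. The paper itself gives no proof of Lemma~\ref{C3} (it defers to \cite{MES2013}), and your argument is essentially the standard one found there --- divisibility playing the role that coprimality plays in the finite-group analogue --- so there is nothing to flag beyond noting that you never actually use the periodicity of $G$, only that of $A$, which is fine since the stated hypothesis is simply stronger than needed.
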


\begin{lemma}
\label{L13}
Let $A$ be a finite group acting on a periodic radicable abelian group $G$. Then $[G,A]$ is radicable.
\end{lemma}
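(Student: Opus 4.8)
The plan is to pass to additive notation and to recognise $[G,A]$ as a finite sum of homomorphic images of $G$, each of which inherits radicability. Since $G$ is abelian I write it additively, and since $A$ acts by automorphisms each $a\in A$ is a group endomorphism of $(G,+)$; because the endomorphisms of an abelian group form a ring, the map $a-1\colon x\mapsto x^a-x$ is again an endomorphism of $G$. In this language the generator $x^{-1}x^a$ of $[G,A]$ becomes $(a-1)x$, and as $x$ ranges over $G$ the elements $(a-1)x$ already form the subgroup $(a-1)G$, namely the image of the endomorphism $a-1$. Since the subgroup generated by a union of subgroups of an abelian group is their sum, I obtain $[G,A]=\sum_{a\in A}(a-1)G$, a finite sum of subgroups of $G$.

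Next I would recall that for an abelian group radicability coincides with divisibility, i.e. $nG=G$ for every positive integer $n$. The key point is that divisibility is preserved by the two operations appearing above. First, each $(a-1)G$ is a homomorphic image of the divisible group $G$ under $a-1$, and a homomorphic image of a divisible abelian group is divisible: if $y=(a-1)x$ and $x=nx'$, then $y=n\bigl((a-1)x'\bigr)$. Second, a sum of finitely many divisible subgroups is divisible: if $z=d_1+\dots+d_r$ with each $d_i=ne_i$, then $z=n(e_1+\dots+e_r)$. Combining these, $[G,A]=\sum_{a\in A}(a-1)G$ is divisible, hence radicable, which is the assertion.

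There is no serious obstacle here: the argument becomes purely formal once $[G,A]$ is rewritten as $\sum_{a\in A}(a-1)G$. The only points requiring care are the verification that $a-1$ is genuinely an endomorphism, which uses commutativity of $G$, and the identification of the generating set $\{x^{-1}x^a\}$ with the union of the images $(a-1)G$, so that $[G,A]$ is exactly their sum and nothing larger. I would also remark that neither periodicity of $G$ nor finiteness of $A$ is strictly needed for the conclusion: these hypotheses simply describe the setting in which the lemma will later be applied, and finiteness of $A$ only ensures that the relevant sum of divisible subgroups is finite, although the same conclusion holds verbatim for an arbitrary family of divisible subgroups.
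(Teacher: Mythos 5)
Your proof is correct and complete. One thing worth knowing: the paper itself contains no proof of Lemma \ref{L13} --- it is one of the ``well-known lemmas'' whose proofs are delegated to \cite{MES2013} --- so there is no in-text argument to compare against; your write-up supplies the standard self-contained one. The key identifications are all sound: since $G$ is abelian, each $a-1$ lies in the endomorphism ring of $G$, the set $\{x^{-1}x^a : x\in G\}$ for fixed $a$ is exactly the image $(a-1)G$ (this is the step that genuinely uses commutativity, as you note), and hence $[G,A]=\sum_{a\in A}(a-1)G$; images of divisible abelian groups are divisible, sums of divisible subgroups are divisible, and you correctly take the divisors $e_i$ inside the subgroups $(a_i-1)G$ rather than merely in $G$, so the solution of $nz=y$ lands in $[G,A]$ as required. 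Your closing remark is also accurate: neither periodicity of $G$ nor finiteness of $A$ enters the argument, and the conclusion holds for an arbitrary family of divisible subgroups; those hypotheses are stated because they are needed in the companion results (e.g.\ Lemma \ref{C3}) and in the way Lemma \ref{L13} is combined with them in the proof of Theorem \ref{main}, where one derives $[R,g]=[R,{}_{k-1}\,g]$ for a periodic radicable $R$.
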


The following results will be useful later on.

\begin{lemma}
\label{L14}
Let $G$ be a periodic group, and let $N$ be a normal subgroup of $G$. Assume that $[N,x]$ is a Chernikov group for every $x \in G$, and let $R_x$ be its radicable part. Then the subgroup $R$ generated by the subgroups $R_x$ is a radicable abelian normal subgroup of $G$.
\end{lemma}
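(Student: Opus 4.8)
The plan is to prove the three assertions---normality, commutativity, and radicability---in that order, the guiding principle being that the radicable part (finite residual) of a Chernikov group is a characteristic radicable \emph{abelian} subgroup which contains every radicable subgroup of the ambient Chernikov group.

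First I would settle normality. For $g\in G$, conjugation is an isomorphism of $[N,x]$ onto $[N,x]^g=[N^g,x^g]=[N,x^g]$, and since the radicable part is characteristic it is carried onto the radicable part; thus $R_x^{\,g}=R_{x^g}$. Hence conjugation by $g$ merely permutes the family $\{R_x : x\in G\}$, so that $R=\langle R_x : x\in G\rangle$ is normal in $G$. Note also that $R_x\le[N,x]\le N$, whence $R\le N$.

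Next I would reduce the other two assertions to a single point. If $R$ is abelian, then each $R_x$ is a radicable (hence divisible) subgroup of the abelian group $R$, and the subgroup generated by divisible subgroups of an abelian group is again divisible; so $R$ would automatically be radicable. Thus it suffices to prove that $R$ is abelian, i.e.\ that $[R_x,R_y]=1$ for all $x,y\in G$. Here the key structural observation is that if $H:=\langle R_x,R_y\rangle$ can be shown to be a Chernikov group, then $R_x$ and $R_y$, being radicable subgroups of $H$, both lie in the finite residual $S$ of $H$; as $S$ is abelian this gives at once $[R_x,R_y]\le[S,S]=1$.

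So the whole matter comes down to showing that the join of the two finite-rank radicable abelian subgroups $R_x,R_y\le N$ is Chernikov, and this is the step I expect to be the main obstacle. My plan is to prove first that the commutator $[R_x,R_y]$ is Chernikov; once this is available, $[R_x,R_y]$ is normal in $H$ and $H/[R_x,R_y]$ is generated by two commuting divisible abelian subgroups of finite rank, hence is an abelian Chernikov group, so that $H$ is Chernikov-by-Chernikov and therefore Chernikov by the extension-closure recalled above. To control $[R_x,R_y]$ I would work inside the Chernikov groups $[N,s]$ with $s\in R_y$, which contain the relevant commutators $[R_x,s]$, use the minimal condition to force the descending series $[R_x,s]\ge[R_x,{}_{2}s]\ge\cdots$ to terminate, and apply Lemmas \ref{C3} and \ref{L13} to the finite cyclic groups $\langle s\rangle$ acting on the radicable abelian subgroups in play in order to identify the stable terms as radicable. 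The delicate point---and the heart of the argument---is that $R_x$ need not be $\langle s\rangle$-invariant, so one is forced to pass to the $\langle s\rangle$-invariant closure $\langle R_x^{\,s^{i}}\rangle=\prod_i R_{x^{s^{i}}}\le R$ and establish that this finitely generated product of conjugate radicable parts is Chernikov before Lemmas \ref{C3}, \ref{L13} and \ref{C1} can be brought to bear; organising this reduction so that the minimal condition and the centrality statement of Lemma \ref{C1} force the surviving commutators to vanish is where the real work lies.
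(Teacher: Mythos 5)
Your first two steps are fine: the conjugation argument $R_x^{\,g}=R_{x^g}$ correctly gives normality of $R$ in $G$, and the reduction of radicability to commutativity is exactly right. But the heart of the matter --- that $[R_x,R_y]=1$ --- is not proved in your proposal; it is only a plan, and you yourself flag its key step (that $\langle R_x,R_y\rangle$ is Chernikov) as an unresolved obstacle "where the real work lies." As it stands, that is a genuine gap: in a general periodic group the join of two radicable abelian Chernikov subgroups need not be Chernikov, so the strategy cannot be completed without some additional structural input, and the machinery you propose (descending chains of $[R_x,{}_l s]$, invariant closures $\prod_i R_{x^{s^i}}$, etc.) is never brought to a conclusion.

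The missing observation, which both closes your gap and collapses the whole difficulty, is that each $R_x$ is normal in $N$. Indeed, the identity $[a,x]^n=[an,x][n,x]^{-1}$ shows that $[N,x]$ is normal in $N$ (in fact in $N\langle x\rangle$), and $R_x$ is characteristic in $[N,x]$, hence $R_x\trianglelefteq N$. This is what the paper uses: for any $x,y\in G$ the set $R_xR_y$ is then a subgroup, a product of two \emph{abelian normal} subgroups of $N$, so $[R_x,R_y]\le R_x\cap R_y$ is central in $R_xR_y$ and $R_xR_y$ is nilpotent of class at most two; since it is periodic, Lemma \ref{C1} forces the radicable abelian subgroups $R_x$ and $R_y$ to be central in it, so $R_xR_y$ is abelian and $R$ is abelian. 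Note that normality of $R_x$ in $N$ would also rescue your own route instantly: $R_xR_y$ is then Chernikov (an extension of $R_x$ by a quotient of $R_y$), and your finite-residual argument applies; moreover the "delicate point" you worry about disappears, since $R_x\trianglelefteq N$ is automatically $\langle s\rangle$-invariant for $s\in R_y\le N$. So the gap is not in your overall architecture but in the absence of this one structural fact, without which the central claim remains unestablished.
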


\begin{proof}
Clearly, for every $x\in G$, the subgroup $R_x$ is normal in $N$. Therefore $R$ is normal as well. Moreover, for every $x,y \in G$, the product $R_xR_y$ is nilpotent of class at most two and so abelian by Lemma \ref{C1}. Hence, $R$ is abelian as well and the lemma follows.
\end{proof}

Recall that a key property of radicable abelian groups is that any radicable subgroup of an abelian group $A$ is a direct factor of $A$.

\begin{lemma}
\label{P8}
Let $m$ be a non-negative integer, and let $G$ be a periodic nilpotent group acting on a periodic radicable abelian group $R$. Suppose that for each $x \in G$ the subgroup $[R,x]$ is a direct product of at most $m$ Prüfer subgroups. Then there exists an integer $h=h(m)$ depending only on $m$ such that the subgroup $[R,G]$ is a direct product of at most $h$ Prüfer subgroups.
\end{lemma}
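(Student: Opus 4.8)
The plan is to reduce to the case of a finite acting group and then to linearize the problem so that a single averaging estimate yields a bound depending on $m$ alone. Since a periodic nilpotent group is locally finite, I would write $G$ as a directed union of its finite subgroups $F$ and observe that $[R,G]=\bigcup_F[R,F]$ is then a directed union. Each $[R,x]$ is radicable by Lemma \ref{L13} (as $\langle x\rangle$ is finite), so $[R,G]$, being generated by radicable subgroups of an abelian group, is itself radicable; hence it is a direct product of Prüfer groups and ``the number of factors'' is exactly its rank. Because the rank of a divisible abelian torsion group is read off its $p$-socles, a directed union of such groups of rank at most $h$ again has rank at most $h$. It therefore suffices to prove that $\operatorname{rank}[R,F]\leq 2m$ for every finite $F\leq G$, and from here on I assume $G$ finite.

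Next I would linearize prime by prime. Decompose $R=\bigoplus_p R_p$ into primary components; each $R_p$ is a divisible abelian $p$-group on which $G$ acts. Passing to the rationalized Tate module and extending scalars to $\mathbb{C}$ produces a complex $G$-module $V_p$ with $\operatorname{rank}[R_p,x]=\dim(x-1)V_p$ and $\operatorname{rank}[R,G]=\sum_p\dim[V_p,G]$. The decisive point is that, the characteristic being zero, $V_p$ is semisimple by Maschke's theorem. Writing $W_p=[V_p,G]$, semisimplicity gives $V_p=V_p^{G}\oplus W_p$ and shows that $W_p$ has no trivial constituent; moreover $\dim W_p\leq |G|\,m$ is finite. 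Observe that nilpotency plays no role here: it has already done its work in guaranteeing local finiteness.

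The heart of the argument is an averaging estimate. For $x\in G$ one has $\dim(x-1)W_p=\dim W_p-\dim W_p^{\langle x\rangle}$, and summing the hypothesis over the primes gives $\sum_p\dim(x-1)W_p\leq m$ for every $x$. I would then average over $x\in G$ and invoke the following fact: \emph{if $S$ is a nontrivial irreducible complex module for the finite group $G$, then $\sum_{x\in G}\dim S^{\langle x\rangle}\leq\tfrac12|G|\dim S$.} This follows by considering the multiset of all eigenvalues of all elements $x\in G$ acting on $S$: there are $|G|\dim S$ of them, all roots of unity, and their sum equals $\sum_{x}\chi_S(x)=|G|\langle\chi_S,1\rangle=0$ since $S$ is nontrivial. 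If the eigenvalue $1$ occurs $T=\sum_{x}\dim S^{\langle x\rangle}$ times, the remaining $|G|\dim S-T$ eigenvalues have modulus $1$ and sum to $-T$, so the triangle inequality gives $T\leq |G|\dim S-T$. Applying this to each constituent of $W_p$ yields $\sum_{x}\dim W_p^{\langle x\rangle}\leq\tfrac12|G|\dim W_p$, hence $\sum_{x}\dim(x-1)W_p\geq\tfrac12|G|\dim W_p$; summing over $p$ and comparing with the averaged hypothesis gives $\tfrac12|G|\operatorname{rank}[R,G]\leq m|G|$, that is $\operatorname{rank}[R,G]\leq 2m$.

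The step I expect to be the genuine obstacle is obtaining a bound \emph{uniform in the nilpotency class}. A direct induction on the class, peeling off $\gamma_c(G)$ and using Lemma \ref{C3}, merely reproves the abelian case at each layer and yields a bound of order $mc$, which degrades as the class grows; and the hypothesis $\operatorname{rank}[R,x]\leq m$ does not obviously bound the class either. The linearization above is precisely what circumvents this: once the problem is transported to characteristic zero the estimate becomes insensitive to the class of $G$, and the eigenvalue/triangle-inequality lemma delivers the absolute constant $2m$. The remaining points demanding care are the identity $\operatorname{rank}[R_p,x]=\dim(x-1)V_p$, so that the divisible rank is correctly recovered from the Tate module, and the bookkeeping that lets $R$ have infinite rank while each $W_p$ stays finite dimensional and the sum over primes remains controlled.
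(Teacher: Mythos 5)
Your argument is correct in substance, and it is a genuinely different proof from the one in the paper. The paper never leaves elementary group theory: it inducts on $m(G,R)=\max_x m_x$, uses nilpotency to pick a nontrivial central element $z$, and splits according to whether $m(G,R/[R,z])$ drops; the equality case is excluded by showing $[R,y,z]=[R,z,y]=\{1\}$ and playing $[R,y]$, $[R,z]$ and $[R,yz]$ against one another until $z$ is forced to act trivially. This gives the recursion $h(m)\leq m+h(m-1)$, hence a quadratic bound $h(m)\leq m(m+1)/2$ that is uniform in the nilpotency class — so the obstacle you single out in your last paragraph is circumvented in the paper not by linearization but by inducting on $m$ rather than on the class. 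Your route — local finiteness to reduce to finite $G$, linearization in characteristic zero, and the eigenvalue/triangle-inequality bound $\sum_{x\in G}\dim S^{\langle x\rangle}\leq\frac{1}{2}|G|\dim S$ for nontrivial irreducible $S$ (a classical fact, and the starting point of the Guralnick--Maroti paper \cite{gura} that the authors cite in another context) — buys two things the paper's proof does not: a linear bound $h(m)=2m$, which is essentially sharp (let $(\mathbb{Z}/2\mathbb{Z})^k$ act on a direct sum of Prüfer groups indexed by its $2^k-1$ nontrivial characters: each $[R,x]$ has rank $m=2^{k-1}$ while $[R,G]$ has rank $2m-1$), and the removal of nilpotency altogether, since your proof works for every locally finite $G$.

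The one step that is not yet watertight is the one you flag yourself: when $R_p$ has infinite rank, the Tate module is not free of countable rank (a rank-one divisible subgroup of an infinite direct sum of Prüfer groups need not lie in any finite subsum), $T_p$ is only left exact integrally, and so the identities $\operatorname{rank}[R_p,x]=\dim(x-1)V_p$ and $\operatorname{rank}[R_p,G]=\dim[V_p,G]$ require justification. The clean repair uses a tool already in the paper: replace $R_p$ by $E_p=[R_p,G]$, which is $G$-invariant and divisible of finite rank at most $m|G|$ (a sum of the $|G|$ divisible subgroups $[R_p,x]$, each of rank at most $m$), and which satisfies $[E_p,G]=E_p$ by Lemma \ref{C3}. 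Since $\operatorname{End}(E_p)\cong M_r(\mathbb{Z}_p)$ with $r=\operatorname{rank}E_p$, Smith normal form over $\mathbb{Z}_p$ gives $\operatorname{rank}\,(x-1)E_p=\operatorname{rank}_{\mathbb{Q}_p}(x-1)$ exactly; the equality $[E_p,G]=E_p$ says precisely that the linearized module has no trivial constituents; and $\operatorname{rank}\,(x-1)E_p\leq\operatorname{rank}[R_p,x]$ transfers the hypothesis. Your averaging estimate then yields $\operatorname{rank}[R,G]=\sum_p\operatorname{rank}E_p\leq 2m$ with no further changes.
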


\begin{proof}
For each $x\in G$, let $m_x$ be the number of Prüfer direct factors of the subgroup $[R,x]$. Define $m(G,R)$ to be the maximum of all the numbers $m_x$. The lemma will be proved by induction on $m(G,R)$.
\par 
Clearly, if $m(G,R)=0$, then $[R,G]$ is trivial. Assume that $m(G,R)\geq 1$ and the existence of $h(m-1)$ is established.

We may assume that $G$ is nontrivial and acts on $R$ faithfully. Choose a nontrivial element $z\in Z(G)$. Observe that $[R,z]$ is $G$-invariant. In a natural way the group $G$ acts on the quotient $R/[R,z]$, and thus we can consider the quantity $m(G,R/[R,z])$. If $m(G,R/[R,z]) < m(G,R)$, then the subgroup $[R,G]/[R,z]$ is a direct product of at most $h(m-1)$ Prüfer subgroups. Consequently, since $[R,z]$ is a direct factor of $[R,G]$, it follows that
$[R,G]$ itself is a direct product of at most $m+h(m-1)$ Prüfer subgroups.
\par
Therefore assume that $m(G,R/[R,z])=m(G,R)=m$. Then there exists an element $y\in G$ such that the subgroup $[R,y]/([R,y] \cap [R,z])$ is a direct product of exactly $m$ Prüfer subgroups.  On the other hand, $m_y\leq m$, and therefore $m_y=m$, which implies that the intersection $[R,y]\cap [R,z]$ does not contain any Pr\"ufer subgroups and hence is finite. Moreover, since $[R,z]$ is $y$-invariant and $[R,y]$ is $z$-invariant, we have $[R,y,z] \leq [R,y]\cap [R,z]$ and $[R,z,y] \leq [R,y]\cap [R,z]$. 

Clearly, the mapping
$f \colon [r,y] \to [r,y,z]$ is a homomorphism from $[R,y]$ to $[R,y,z]$, and since $[R,y]$ has no proper subgroups of finite index, it follows that $[R,y,z]= \{1\}$.  In the same way, we also obtain $[R,z,y]= \{1\}$. Then $[R,y,y]=[R,y,zy]$, and  Lemma \ref{C3} shows that $[R,y]=[R,y,y] \leq [R,zy]$. Similarly, $[R,z] \leq [R,yz]$, where of course  $[R,yz]=[R,zy]$. Therefore $[R,y][R,z] \leq [R,yz]$, and since $m_{yz} \leq m$ and $m_y =m$, it follows that the subgroup $[R,y][R,z]$ is a direct product of exactly $m$ Prüfer subgroups. Moreover, since $[R,y]$ is a direct factor of $[R,y][R,z]$ and it is also a direct product of exactly $m$ Prüfer subgroups, it follows that $[R,z]$ must be finite and so trivial. Since $G$ acts faithfully on $R$, it follows that the element $z$ is trivial, which is a contradiction.
\end{proof}

\begin{lemma}
\label{L12}
Let $G$ be a periodic group, and let $x \in G$ such that $[G,x]$ is a Chernikov group. Let $R$ be the radicable part of $[G,x]$ and assume that $[R,x]=\{1\}$. Then $R=\{1\}$.
\end{lemma}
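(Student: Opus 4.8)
The plan is to reduce to the case where the commutator subgroup $[G,x]$ is abelian, where a direct computation with the finite order of $x$ settles the matter. Write $C=[G,x]$ and let $t$ be the order of $x$ (finite, since $G$ is periodic). The first observation is that $C$ is normal in $G$: the identity $[ab,x]=[a,x]^{b}[b,x]$ shows that every conjugate of a generator $[a,x]$ again lies in $C$. Consequently the radicable part $R$, being characteristic in $C$, is normal in $G$. Next I would show that $R$ is central in $C$. Since $[R,x]=\{1\}$ and $R\trianglelefteq G$, conjugation yields a homomorphism $\varphi\colon G\to\mathrm{Aut}(R)$ with $\varphi(x)=1$; hence $\varphi(C)=\varphi([G,x])=[\varphi(G),\varphi(x)]=\{1\}$, so $C$ centralizes $R$, i.e.\ $R\le Z(C)$.

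Because $R$ has finite index in $C$ and lies in $Z(C)$, the group $C$ is centre-by-finite, so by Schur's theorem its derived subgroup $C'$ is finite. As $C'$ is characteristic in $C$ and $C\trianglelefteq G$, I may pass to $\bar G=G/C'$. There $\bar C=\overline{[G,x]}=[\bar G,\bar x]$ is an abelian Chernikov group; its radicable part is the image $\bar R$ of $R$ (a radicable subgroup of finite index), and $[\bar R,\bar x]=\overline{[R,x]}=\{1\}$. Thus it suffices to treat the case in which $C$ is abelian, for then $\bar R=\{1\}$ gives $R\le C'$, and a radicable subgroup of a finite group is trivial, whence $R=\{1\}$.

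For the abelian case I would exploit the finite order of $x$ through the norm element $\nu=1+\bar x+\dots+\bar x^{\,t-1}$ acting on the $\langle\bar x\rangle$-module $\bar C$. The standard identity $[\bar g,\bar x^{\,n}]=[\bar g,\bar x]^{\,1+\bar x+\dots+\bar x^{\,n-1}}$, together with $\bar x^{\,t}=1$, shows that $\nu$ annihilates every generator $[\bar g,\bar x]$ of $\bar C$, and hence annihilates all of $\bar C$. On $\bar R$, however, $\bar x$ acts trivially, so $\nu$ acts as multiplication by $t$; therefore $\bar R^{\,t}=\{1\}$. Since a radicable abelian group of finite exponent is trivial, we get $\bar R=\{1\}$, as required.

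The crux—and the only place where periodicity is genuinely used—is this last computation: one needs $x$ to have finite order so that the norm $\nu$ both kills $\bar C$ and reduces to multiplication by $t$ on the fixed part $\bar R$. The main obstacle in the general case is precisely that the generators $[g,x]$ of $C$ need not lie in $R$, so the norm argument cannot be applied to $R$ directly; the two structural reductions above—normality of $[G,x]$ forcing $R\le Z(C)$, and Schur's theorem reducing to an abelian $C$—are exactly what make the computation applicable.
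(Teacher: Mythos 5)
Your proof is correct, and while its first half runs parallel to the paper's, the endgame is genuinely different. Like the paper, you first show $R\le Z([G,x])$ (you do this via the conjugation homomorphism $G\to\mathrm{Aut}(R)$ killing $\varphi(x)$, where the paper invokes the Three Subgroup Lemma --- an equivalent step), then apply Schur's theorem and pass to $G/[G,x]'$ to reduce to abelian $C=[G,x]$. From there the paper performs a further reduction: it decomposes $[G,x]=R\times F$ with $F$ finite (using that a radicable subgroup of an abelian group is a direct factor), shows the normal closure $E=F^G$ is finite (abelian of the same exponent as $F$, inside a group with the minimal condition), passes to $G/E$ so that $[G,x]=R$, and only then exploits the order of $x$: since $[G,x,x]=1$, the subgroup $\langle x^G\rangle$ is abelian of exponent equal to the order of $x$, and it contains $[G,x]=R$, so $R$ has finite exponent and is trivial. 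You instead run the norm element $\nu=1+x+\dots+x^{t-1}$ of the group ring of $\langle x\rangle$ over the abelian module $C$: the identity $[g,x^t]=[g,x]^{\nu}$ together with $x^t=1$ shows $\nu$ annihilates the generators $[g,x]$ and hence all of $C$, while on the fixed submodule $\bar R$ it is multiplication by $t$, giving $\bar R^{\,t}=\{1\}$ directly. This is cleaner: it dispenses with both the direct-factor decomposition and the normal-closure reduction, and it isolates precisely where periodicity enters (only to make $t$ finite). Both arguments share the same conceptual core --- the finite order of $x$ forces (the image of) $R$ to have finite exponent, and a radicable abelian group of finite exponent is trivial --- but your module-theoretic execution of that idea is shorter than the paper's group-theoretic one.
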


\begin{proof}
Note that to prove the statement it is enough to show that $R$ is finite. Since $R$ is normal in $G$, it follows that $[R,G,x] = \{1\}$; then applying the Three Subgroup Lemma we have that $R$ is central in $[G,x]$. Therefore the quotient $[G,x]/Z([G,x])$ is finite, and Schur's Theorem yields that $[G,x]'$ is finite. We pass to the quotient $G/[G,x]'$ and assume that $[G,x]$ is abelian. Clearly, there exists a finite subgroup $F$ of $[G,x]$ such that $[G,x]= R \times F$. Put $E= F^G$. Since $E$ is contained in $[G,x]$, it follows that $E$ is abelian and so it has the same exponent as $F$. Therefore $E$ is finite, and hence we can replace $G$ by $G/E$ and assume that $[G,x]=R$. In particular, $[G,x,x]= [R, x] = \{1\}$, and so $\langle x^{G} \rangle$ is abelian.  It follows that the exponent of $\langle x^{G} \rangle$ is exactly the order of $x$, which is finite; then also $[G, x]= R$ has finite exponent, thus $R$ is trivial.
\end{proof}

We will require the following result whose proof can be found in \cite{MES2013}.

\begin{theorem}\label{13}
Let $k$ be a positive integer, and let $G$ be a group such that $\langle g^{X_k(G)}\rangle$ is a Chernikov group for every $g\in G$. Then $\langle g^{\gamma_k(G)}\rangle$ is a Chernikov group for every $g\in G$.
\end{theorem}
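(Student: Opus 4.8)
The plan is to fix $g\in G$ and prove directly that $M:=\langle g^{\gamma_k(G)}\rangle$ is Chernikov, writing throughout $W:=X_k(G)$ and $K:=\gamma_k(G)=\langle W\rangle$. The single structural feature I would lean on is that $W$ is conjugation-invariant: since $[x_1,\dots,x_k]^x=[x_1^x,\dots,x_k^x]$, every conjugate of a $\gamma_k$-value is again a $\gamma_k$-value. Two reductions come first. Because $1\in W$, every $h\in G$ lies in $\langle h^{W}\rangle$, which is Chernikov and hence periodic; thus $G$ is periodic and the torsion lemmas \ref{C1}, \ref{C3}, \ref{L13}, \ref{L14} are available. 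Conjugation-invariance makes $K$ normal in $G$, and a direct computation (using $g^{Kg}=g^{gK}=g^{K}$) shows $M$ is normalized by $g$, so $M\trianglelefteq\langle g\rangle K$. Finally, modulo $[M,K]$ all the $K$-conjugates of $g$ collapse onto $g$, so $M/[M,K]$ is cyclic generated by the image of $g$, hence finite. As the class of Chernikov groups is extension-closed (recalled in the introduction), it suffices to prove that $[M,K]$ is Chernikov.

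Next I would isolate how the hypothesis feeds in locally. For every $h\in G$ and $w\in W$ we have $[h,w]=h^{-1}h^{w}\in\langle h^{W}\rangle$, a Chernikov group, and every $K$-conjugate of $g$ is reached by a finite chain of single $W$-conjugations $g=g_0,\ g_i=g_{i-1}^{w_i}$ with $g_i\in\langle g_{i-1}^{W}\rangle$. Conjugation-invariance lets these products be re-indexed, since $g^{wc}=(g^{c})^{w^{c}}$ with $w^{c}\in W$; in particular, for every $c\in K$ one gets the clean identity $\langle g^{W}\rangle^{c}=\langle(g^{c})^{W}\rangle$, so each conjugate of the Chernikov group $\langle g^{W}\rangle$ is again one of the Chernikov groups furnished by the hypothesis. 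These observations give pointwise control of the commutators $[m,w]$ but, crucially, only relative to the moving Chernikov group $\langle m^{W}\rangle$.

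The heart of the proof is to convert this pointwise control into a single Chernikov structure on $M$, and here I would use the minimal condition together with Lemma \ref{L14}. Granting that each $[M,x]$ ($x\in K$) is Chernikov, let $R_x$ be its radicable part; Lemma \ref{L14} applied with $N=M$ then assembles $R:=\langle R_x:x\in K\rangle$ into a radicable abelian normal subgroup of $M$ (its abelianness resting on Lemma \ref{C1}). Lemma \ref{P8} keeps the number of Prüfer factors bounded, so that $R$ is genuinely Chernikov rather than of infinite rank, while Lemmas \ref{C3} and \ref{L13} guarantee that $R$ behaves well under the $K$-action and that $[R,x]$ stays radicable. It then remains to show that $M/R$ is finite: one argues that $M/R$ is periodic and locally finite with uniformly controlled conjugacy data, and applies a Neumann--Wiegold-type covering argument -- the Chernikov analogue of the finiteness theorem of \cite{DDMP2021} -- to force finiteness. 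With $R$ Chernikov and $M/R$ finite, extension-closure yields that $M$ is Chernikov; Lemma \ref{L12} serves as the natural device for excluding a spurious residual radicable part along the way (if the radicable part satisfies $[R,x]=\{1\}$, then $R=\{1\}$).

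The main obstacle is exactly the step just quarantined: the Chernikov groups $\langle h^{W}\rangle$ vary with $h$ and carry no a priori common bound on their sizes, so before invoking Lemma \ref{L14} one must first establish that each $[M,x]$ is Chernikov and that $M$ is locally finite, and afterwards that only finitely many cosets of $R$ survive. In other words, $M$ is a priori only an ascending union of Chernikov pieces, which need not be Chernikov, and forcing stabilization -- that the radicable parts $R_x$ fuse into one subgroup of finite total rank and that $M/R$ is finite -- is the genuinely hard, non-uniform part. This is precisely the stabilization, extracted from the minimal condition together with a Neumann-style commutator estimate, that is carried out in \cite{MES2013}.
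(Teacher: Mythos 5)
Your preliminary reductions are correct and genuinely useful: periodicity of $G$ follows from $1\in X_k(G)$, the set $X_k(G)$ is conjugation-invariant so that $\langle g^{X_k(G)}\rangle^{c}=\langle (g^{c})^{X_k(G)}\rangle$, the subgroup $M=\langle g^{\gamma_k(G)}\rangle$ is normalized by $\langle g\rangle\gamma_k(G)$, and $M/[M,\gamma_k(G)]$ is cyclic and periodic, hence finite, reducing the problem to $[M,\gamma_k(G)]$ by extension-closure. But what follows is not a proof. At the decisive point you write ``granting that each $[M,x]$ is Chernikov,'' and in your closing paragraph you assign ``the genuinely hard, non-uniform part'' -- the stabilization of the radicable parts and the finiteness of $M/R$ -- to \cite{MES2013}. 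That is exactly the reference the paper itself cites for Theorem \ref{13}; the paper offers no proof of its own, importing the statement wholesale. So your attempt amounts to a clean reduction followed by an appeal to the theorem being proved: the passage from the pointwise Chernikov data $\langle h^{X_k(G)}\rangle$, with no common size bound, to a single Chernikov structure on $M$ is the entire content of the statement, and it is missing.

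Moreover, the specific tools you gesture at would fail under the hypotheses of Theorem \ref{13}, which -- unlike Theorem \ref{main} -- carry no uniform size bound $(m,n)$. Lemma \ref{P8} explicitly requires a single integer $m$ bounding the number of Pr\"ufer factors of \emph{every} $[R,x]$; here the sizes of the groups $\langle h^{X_k(G)}\rangle$ vary with $h$ without bound, so the lemma is not applicable and your claim that it ``keeps the number of Pr\"ufer factors bounded'' is unsupported. Likewise, the proposed ``Neumann--Wiegold-type covering argument, the Chernikov analogue of \cite{DDMP2021},'' for the finiteness of $M/R$ needs the uniform bound $n$ on verbal class sizes that \cite{DDMP2021} assumes: mere finiteness of conjugacy classes does not bound derived subgroups (a restricted direct product of infinitely many copies of $S_3$ is an $FC$-group with infinite derived subgroup), and pointwise Chernikov-ness of each $\langle h^{X_k(G)}\rangle$ is precisely the analogous non-uniform hypothesis. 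Converting pointwise minimality into global minimality -- where the Shunkov and Kegel--Wehrfritz characterization via the minimal condition genuinely enters -- is the argument of \cite{MES2013}, and your sketch quarantines it rather than supplies it.
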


We are now ready to prove Theorem \ref{main}. 

\begin{proof}[Proof of Theorem \ref{main}]
In view of Theorem \ref{13}, $\langle g^{\gamma_k(G)}\rangle$ is a Chernikov group for every $g\in G$, and hence 
every subgroup $[\gamma_k(G), g]$ is a Chernikov group as well. For every $g \in G$, let $R_g$ denote the radicable part of $[\gamma_k(G), g]$. An application of Lemma \ref{L14} yields that the subgroup $R$ generated by all the subgroup $R_g$ is a radicable abelian normal subgroup of $G$.

\par
Let $g \in G$; by Lemma \ref{C3} and Lemma \ref{L13}, it follows that $[R, g]=[R, {}_{{k-1}}g]$, where $[R, {}_{{k-1}}g] \subseteq X_k(G)$.  We conclude that
\[[R,g]=[R,g,g] \leq [X_k(G), g] \leq \langle g^{X_k(G)} \rangle,\]
and hence the radicable abelian subgroup $[R,g]$ is contained in the radicable part of $\langle g^{X_k(G)} \rangle$. Therefore, $[R,g]$ is a direct product of at most $m$ Prüfer subgroups.

\par
By the well-known Dedekind's Modular Law, it follows that \[ \langle g^{X_k(G)} \rangle \cap \langle g \rangle[\gamma_k(G),g] =\langle g \rangle(\langle g^{X_k(G)} \rangle \cap [\gamma_k(G),g])= \langle g^{X_k(G)} \rangle,\]
and therefore $\langle g^{X_k(G)} \rangle \cap [g, \gamma_k(G)]$ has finite index in $\langle g^{X_k(G)} \rangle$, as the element $g$ has finite order. In particular, the radicable part of $\langle g^{X_k(G)} \rangle$ is contained in $\langle g^{X_k(G)} \rangle \cap [g, \gamma_k(G)]$, and consequently in $R_g \leq R \cap [g, \gamma_k(G)]$. Then the quotient $\langle g ^{X_k(G)} \rangle / R \cap \langle g ^{X_k(G)} \rangle$ is finite of order at most $n$, so that $\gamma_{k+1}(G)/R$ has finite $(k,n)$-bounded order by [Corollary 1.3, \cite{DDMP2021}].

\par Put $G^*=G/C_G(R)$ and regard  $G^*$ as a group of automorphisms of $R$. Since $R \leq C_G(R)$, it follows that $\gamma_{k+1}(G^*)$ has finite $(k,n)$-bounded order, and hence the centralizer $C^*=C_{G^*}(\gamma_{k+1}(G^*))$ is nilpotent and has finite index $t=t(k,n)$, depending only on $k$ and $n$. \\ On the other hand, Lemma \ref{P8} ensures the existence of an integer $h=h(m)$ such that the subgroup $[R, C^*]$ is a direct product of at most $h$ Prüfer subgroups. Now, let $\{g^*_1, \dots , g^*_t \}$ be a right transversal to $C^*$ in $G^*$. 
We deduce that \[[R,G^*]=[R, C^* \langle g^*_1,\dots,g^*_t \rangle]=[R, C^*][R,g^*_1] \cdots [R,g^*_t], \] and hence $[R,G^*]$ is a direct product of at most $h+tm$ Prüfer subgroups, where the quantity $h+tm$ depends only by $k,m$ and $n$. Observe that $[R,G^*]=[R,G]$.

\par
Finally, we may replace $G$ by the factor group $G/[R,G]$ and assume that $R$ is central. Then, for any $g \in G$, the subgroup $\gamma_k(G)\langle g \rangle$ satisfies the assumption of Lemma \ref{L12}, and therefore we have that $R=\{1\}$.
Consequently, $\gamma_{k+1}(G)$ has finite $(k,n)$-bounded order and the theorem follows.
\end{proof}

\end{document}